\newtheorem{theo}{Theorem}
\newtheorem{coro}[theo]{Corollary}
\theoremstyle{definition}
\newtheorem{defi}[theo]{Definition}
\newtheorem{rema}[theo]{Remark}
\newcommand{\eqspace}{\ensuremath{\mathrel{\phantom{=}}}}
\newcommand{\cdotcup}{\mathrel{\mathaccent\cdot\cup}}
\newcommand{\esg}[2]{#1\langle#2\rangle} % notation for spanning subgraphs
\newcommand{\vig}[2]{#1[#2]} % notation for vertex-induced subgraph
\DeclarePairedDelimiter\abs{\lvert}{\rvert}
\newcommand{\pcoef}[2]{[#1](#2)} % notation for "coefficient" in front of #1 of a polynomial #2
\newcommand{\pdeg}[2]{\deg_{#1}(#2)} % notation for degree of #1 of polynomial #2
\title{Proving properties of \\ the edge elimination polynomial \\ using equivalent graph polynomials}
\author{Martin Trinks\thanks{trinks@hs-mittweida.de, Hochschule Mittweida, University of Applied Sciences, Faculty Mathematics / Sciences / Computer Science, Technikumplatz 17, 09648 Mittweida, Germany}}
\date{\today}
\begin{document}

\maketitle

{\center\setlength{\parindent}{0pt}
\begin{minipage}[c]{5.7cm}
The author receives the grant 080940498 from the European Social Fund (ESF) of the European Union (EU).
\end{minipage}
\hspace{0.4cm}
\begin{minipage}[c]{5.7cm}
\centering
\includegraphics[width=3cm]{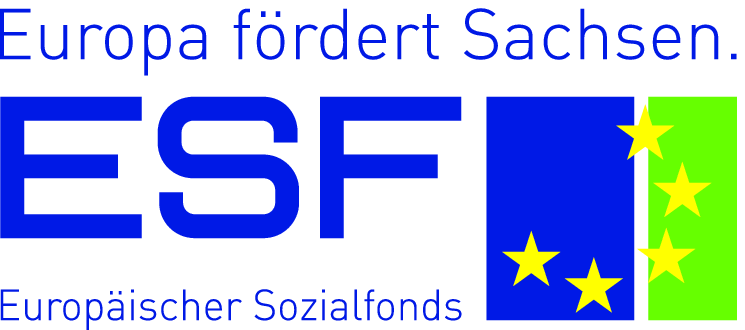}
\hspace{0.25cm}
\includegraphics[width=2cm]{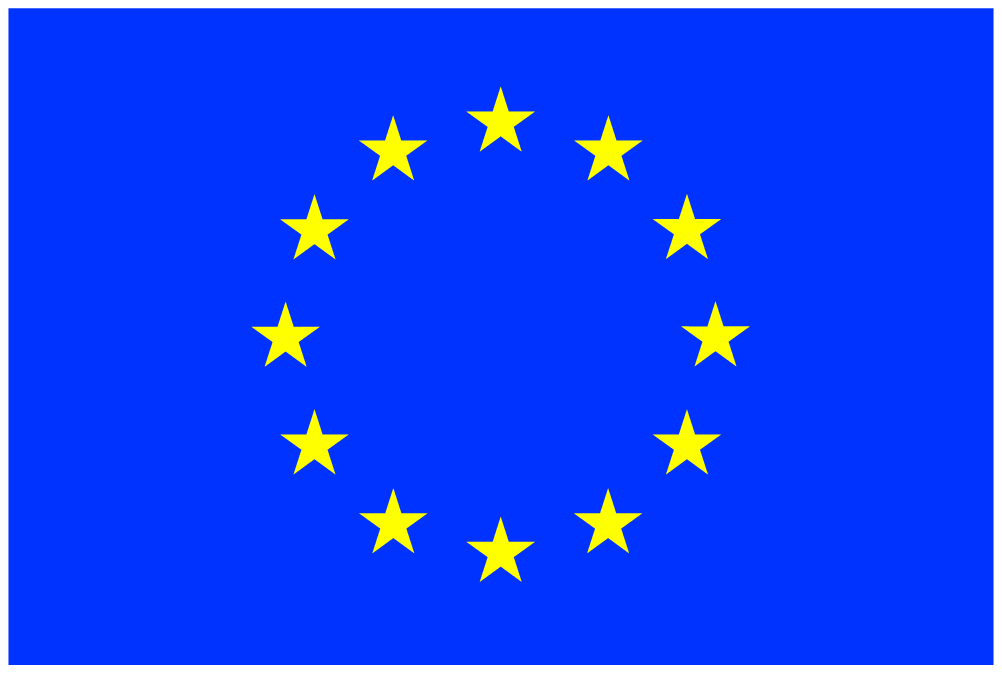}
\end{minipage}
}

\begin{abstract}
\citeauthor{averbouch2008} define the edge elimination polynomial of a graph by a recurrence relation with respect to the deletion, contraction and extraction of an edge. It generalizes some well-known graph polynomials such as the chromatic polynomial and the matching polynomial. By introducing two equivalent graph polynomials, one enumerating subgraphs and the other enumerating colorings, we show that the edge elimination polynomial of a simple graph is reconstructible from its polynomial deck and that it encodes the degree sequence of an arbitrary graph.
\end{abstract}

% !TeX root = paper.tex

\section{Introduction}

In the literature there is a multitude of graph polynomials defined \cite{ellis2011b, noy2003}. Among them are several satisfying recurrence relations with respect to the deletion, contraction and extraction of an edge. \citeauthor{averbouch2008} \cite{averbouch2008, averbouch2010} determine the most general graph polynomial obeying such a recurrence relation, the \emph{edge elimination polynomial}.

Even though the edge elimination polynomial generalizes some well-known graph polynomials (as it generalizes their recurrence relations), only little is known about additional combinatorial information it encodes.

We introduce two graph polynomials and prove that both are equivalent to the edge elimination polynomial. The first, the \emph{subgraph counting polynomial}, counts subgraphs with respect to their number of vertices, edges and connected components. The second, the \emph{trivariate chromatic polynomial}, is the straightforward generalization of both the bad coloring polynomial \cite{welsh1993} and the bivariate chromatic polynomial \cite{dohmen2003}. Obviously, both graph polynomials provide a lot of combinatorial data of a graph. 

We prove that the subgraph counting polynomial of a simple graph with at least three vertices is reconstructible from its polynomial deck and that the trivariate chromatic polynomial of an arbitrary graph encodes the degree sequence. Furthermore, we give the relation %s to the $U$-polynomial \cite{noble1999} and 
to the \emph{subgraph component polynomial} \cite{tittmann2011}.

Other graph polynomials equivalent to the edge elimination polynomial are given by \citeauthor{white2011} \cite{white2011}, by \citeauthor{averbouch2011} \cite{averbouch2011}, and by the present author \cite{trinks2012}. 
Actually, there is one more equivalent graph polynomial in a publication several years ago, the \emph{subgraph enumerating polynomial} defined by \citeauthor{borzacchini1982b} \cite{borzacchini1982b}. The authors also state a recurrence relation \cite[Theorem 2]{borzacchini1982b} similar to the one used for the definition of the edge elimination polynomial.

For the sake of convenience we speak only about graphs and graph polynomials, but we also have hypergraphs and polynomials associated to them in mind. Straightforwardly generalizing the definitions and replacing ``graph'' by ``hypergraph'', all the results stating recurrence relations and the encoding of the degree sequence, that is all results except Theorem \ref{theo:rel_H_Q} and Theorem \ref{theo:recon_H}, stay valid (including the proofs). Results concerning the edge elimination polynomial of hypergraphs are also given by \citeauthor{white2011} \cite{white2011} and the present author \cite[Section 9]{trinks2012}.

\subsection{Graph theory}

A \emph{hypergraph} $G = (V, E)$ is an ordered pair of a set of vertices, the vertex set $V$, and a multiset of edges, the edge set $E$, such that each edge is a non-empty subset of the vertex set, i.e. $e \subseteq V$ for all $e \in E$.

A \emph{graph} $G = (V, E)$ is a hypergraph, such that each edge is a one- or two-element subset of the vertex set, i.e. $e \in \binom{V}{1} \cup \binom{V}{2}$ for all $e \in E$.  A \emph{simple graph} $G = (V, E)$ is a graph, such that each edge is a two-element subset of the vertex set and the edge set is a set, i.e. $E \subseteq \binom{V}{2}$.

For a graph $G = (V, E)$ with a vertex $v \in V$ and an edge $e \in E$, we say that $v$ and $e$ are \emph{incident} to each other, if $v \in e$. The \emph{degree} of $v$ is the number of edges incident to $v$, and we denote the \emph{number of vertices with degree $i$} by $d(G, i)$.  By $V(G)$, $E(G)$ and $k(G)$ we refer to the vertex set, the edge set and the \emph{number of connected components}, respectively.

Further assuming a vertex subset $W \subseteq V$ and an edge subset $F \subseteq E$, we consider the following different types of subgraphs: A graph $H = (W, F)$ is a \emph{subgraph} of $G$, denoted by $H \subseteq G$. The graph $\esg{G}{F} = (V, F)$ is a \emph{spanning subgraph} of $G$ and the graph $\vig{G}{W} = (W, \{e \in E \mid e \subseteq W\})$ is an \emph{induced subgraph} of $G$.

We use the following graph operations:
\begin{itemize}
\item $-v$: \emph{deletion} of the vertex $v$, i.e. vertex $v$ and its incident edges are removed,
\item $-e$: \emph{deletion} of the edge $e$, i.e. edge $e$ is removed,
\item $/e$: \emph{contraction} of the edge $e$, i.e. edge $e$ is removed and its incident vertices are merged (parallel edges and loops may occur),
\item $\dagger e$: \emph{extraction} of the edge $e$, i.e. vertices incident to edge $e$ are deleted%edge $e$, its adjacent edges and its incident vertices are removed
.
\end{itemize}

The graphs arising are denoted by $G_{-v}$, $G_{-e}$, $G_{/e}$ and $G_{\dagger e}$, respectively. For two graphs $G^1$ and $G^2$, $G^1 \cdotcup G^2$ denotes the \emph{disjoint union} of $G^1$ and $G^2$, i.e. the union of disjoint copies of both graphs. $K_n$ is the \emph{complete graph} on $n$ vertices.

Two graphs are \emph{isomorphic} to each other, if one arises from the other by a relabeling of the vertices (in the vertex set and in the edges). An \emph{(invariant) graph polynomial} is a function, which maps isomorphic graphs to the same polynomial (of some polynomial ring over a set $X$ of commuting variables, for example $\mathds{R}[X]$). A graph polynomial is \emph{equivalent} to another graph polynomial, if both can be calculated from each other. (We only apply variable substitution and multiplication with a constant factor.) For a (graph) polynomial $P$, a monomial $m$ and a variable $z$, we denote by $\pcoef{m}{P}$ the coefficient of $m$ in $P$ and by $\pdeg{z}{P}$ the degree of $z$ in $P$.

For all other notations refer to \cite{diestel2010}.

\subsection{The edge elimination polynomial}

The edge elimination polynomial \cite{averbouch2008, averbouch2010} is defined by a recurrence relation.

\begin{defi}[Equation (13) in \cite{averbouch2008}]
\label{defi:eep}
Let $G=(V, E), G^1, G^2$ be graphs and $e \in E$ an edge of $G$. The \emph{edge elimination polynomial} $\xi(G, x, y, z)$ is defined as
\begin{align}
& \xi(G, x, y, z) = \xi(G_{-e}, x, y, z) + y \cdot \xi(G_{/e}, x, y, z) + z \cdot \xi(G_{\dagger e}, x, y, z), \label{eq:defi_eep_1} \\
& \xi(G^1 \cdotcup G^2, x, y, z) = \xi(G^1, x, y, z) \cdot \xi(G^2, x, y, z) \label{eq:defi_eep_2}, \\
& \xi(K_1, x, y, z) = x. \label{eq:defi_eep_3}
\end{align}
\end{defi}

Averbouch, Godlin and Makowsky \cite[Theorem 3]{averbouch2008} prove that the edge elimination polynomial is the most general graph polynomial, which satisfies a linear recurrence relation with respect to the three edge operations and is multiplicative in components, that is the graph polynomial of the disjoint union of two graphs equals the product of the graph polynomials of the two graphs, as given in Equation \ref{eq:defi_eep_2}.

\subsection{Potts model, bad coloring polynomial and \texorpdfstring{\\}{} bivariate chromatic polynomial}

There is a multitude of graph polynomials generalized by the edge elimination polynomial \cite[Subsection 1.1]{averbouch2008}. We introduce some of them that we use in the following: %remainder of the paper: 
the Potts model, the bad coloring polynomial and the bivariate chromatic polynomial. 

The \emph{Potts model} $Z(G, x, y)$ \cite{sokal2005} %,welsh2000}
of a graph $G = (V, E)$ is the generating function for the number of connected components and edges in the spanning subgraphs:
%\cite[Equation (2.1)]{sokal2005}:
\begin{align}
Z(G, x, y) = \sum_{A \subseteq E}{x^{k(\esg{G}{A})} y^{\abs{A}}}.
\end{align}

It is also known as the ``bivariate version of the multivariate Tutte polynomial'' \cite[Equation (2.1)]{sokal2005} and as ``bivariate partition function of the $q$-state Potts model'' \cite[Section 1.1]{averbouch2010}. The Potts model $Z(G) = Z(G, x, y)$ satisfies \cite[Equation (4.16) and (4.1)]{sokal2005}
\begin{align}
& Z(G) = Z(G_{-e}) + y \cdot Z(G_{/e}), \label{eq:Z_rec1} \\
& Z(G^1 \cdotcup G^2) = Z(G^1) \cdot Z(G^2), \label{eq:Z_rec2} \\
& Z(K_1) = x. \label{eq:Z_rec3}
\end{align}

The \emph{bad coloring polynomial} $\tilde{\chi}(G, x, z)$ \cite{welsh1993} of a graph $G = (V, E)$ is the generating function of the number of (vertex) colorings with (at most) $x$ colors with respect to the number of monochromatic edges, which are edges whose incident vertices are all mapped to the same color:
%\cite[page 63]{welsh1993}:
\begin{align}
\tilde{\chi}(G, x, z) 
&= \sum_{\phi \colon V \rightarrow \{1, \ldots, x\}}{\prod_{\substack{e \in E \\ \exists c \forall v \in e \colon \phi(v) = c}}{z}}.
\end{align}
It is also known as ``monochrome polynomial'' \cite{welsh1993} and ``coboundary polynomial'' \cite{goodall2006}. The bad coloring polynomial $\tilde{\chi}(G) = \tilde{\chi}(G, x, z)$ satisfies \cite[Equation (4.4.3)]{welsh1993}
\begin{align}
& \tilde{\chi}(G) = \tilde{\chi}(G_{-e}) + (z - 1) \cdot \tilde{\chi}(G_{/e}), \\
& \tilde{\chi}(G^1 \cdotcup G^2) = \tilde{\chi}(G^1) \cdot \tilde{\chi}(G^2), \\
& \tilde{\chi}(K_1) = x.%, \\
\end{align}

The \emph{bivariate chromatic polynomial} $P(G, x, y)$ \cite{dohmen2003} of a graph $G = (V, E)$ is the number of (vertex) colorings with (at most) $x$ colors, such that the vertices of monochromatic edges are not colored with one of the first $y$ colors:
%\cite[Section 1]{dohmen2003}:
\begin{align}
P(G, x, y) 
&= \sum_{\phi \colon V \rightarrow \{1, \ldots, x\}}{\prod_{\substack{e \in E \\ \exists c \leq y \forall v \in e \colon \phi(v) = c}}{0}}.
\end{align}
The bivariate chromatic polynomial $P(G) = P(G, x, y)$ satisfies \cite[Proposition 1]{averbouch2008}
\begin{align}
& P(G) = P(G_{-e}) - P(G_{/e}) + (x-y) \cdot P(G_{\dagger e}), \\
& P(G^1 \cdotcup G^2) = P(G^1) \cdot P(G^2), \\
& P(K_1) = x.
\end{align}

From their recurrence relations it follows that all three graph polynomials, the Potts model, the bad coloring polynomial and the bivariate chromatic polynomial, are specializations of the edge elimination polynomial:
\begin{alignat}{2}
& Z(G, x, y) &&= \xi(G, x, y, 0), \\
& \tilde{\chi}(G, x, z) &&= \xi(G, x, z-1, 0), \\
& P(G, x, y) &&= \xi(G, x, -1, x-y).
\end{alignat}
The first two are equivalent to each other and are strongly related to the Tutte polynomial \cite{tutte1954, ellis2011}; all three are generalizations of the chromatic polynomial \cite{birkhoff1912, dong2005}.
% !TeX root = paper.tex

\section{The subgraph counting polynomial}

The subgraph counting polynomial generalizes the Potts model by summing over all subgraphs instead only spanning subgraphs. We show that this polynomial is equivalent to the edge elimination polynomial.

\begin{defi}
\label{defi:h}
Let $G = (V, E)$ be a graph. The \emph{subgraph counting polynomial} $H(G, v, x, y)$ is defined as
\begin{align}
H(G, v, x, y)
&= \sum_{H = (W, F) \subseteq G}{v^{\abs{W}} x^{k(H)} y^{\abs{F}}}.
\end{align}
\end{defi}

The subgraph counting polynomial can be stated as
\begin{align}
H(G, v, x, y)
&= \sum_{W \subseteq V}{v^{\abs{W}} \cdot Z(\vig{G}{W}, x, y)}, \label{eq:H_exp_1}
\end{align}
and it is therefore related to the Potts model $Z(G, x, y)$ by 
\begin{align}
Z(G, x, y) = \pcoef{v^{\abs{V}}}{H(G, v, x, y)}. \label{eq:H_Z}
\end{align}

\begin{theo}
\label{theo:scp_rec}
Let $G = (V, E), G^1, G^2$ be graphs and $e \in E$ an edge of $G$. The subgraph counting polynomial $H(G) = H(G, v, x, y)$ satisfies
\begin{align}
& H(G) = H(G_{-e}) + v^{\abs{e} - 1} y \cdot H(G_{/e}) - v^{\abs{e} - 1} y \cdot H(G_{\dagger e}), \label{eq:theo_scp_rec_1} \\
& H(G^1 \cdotcup G^2) = H(G^1) \cdot H(G^2) \label{eq:theo_scp_rec_2}, \\
& H(K_1) = 1 + vx. \label{eq:theo_scp_rec_3}
\end{align}
\end{theo}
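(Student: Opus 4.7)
The proof plan splits into the three equations of the statement, starting with the two easy ones and concluding with the recurrence \eqref{eq:theo_scp_rec_1}.

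For the initial value \eqref{eq:theo_scp_rec_3}, the graph $K_1$ has exactly two subgraphs, namely $(\emptyset, \emptyset)$ and $(V(K_1), \emptyset)$, contributing $1$ and $vx$ respectively, which is immediate from Definition \ref{defi:h}. For the multiplicativity \eqref{eq:theo_scp_rec_2}, I would observe that every subgraph $H$ of $G^1 \cdotcup G^2$ decomposes uniquely as a disjoint union $H^1 \cdotcup H^2$ with $H^i \subseteq G^i$, and that $|V(H)|$, $|E(H)|$ and $k(H)$ are each additive over disjoint unions, so that the sum in Definition \ref{defi:h} factors into the product of the two corresponding sums.

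The heart of the argument is the recurrence \eqref{eq:theo_scp_rec_1}. I would fix an edge $e \in E$ and partition the subgraphs $H = (W, F) \subseteq G$ according to whether $e \in F$ or not. The subgraphs with $e \notin F$ are exactly the subgraphs of $G_{-e}$ (same vertex set, edge set inside $E \setminus \{e\}$), contributing $H(G_{-e})$. For the subgraphs with $e \in F$, which necessarily satisfy $e \subseteq W$, I would set up the bijection $\phi$ that removes $e$ from $F$ and identifies the $|e|$ vertices of $e$ to a single vertex $\tilde{e}$ of $G_{/e}$. This bijection lands in the set of subgraphs $(W', F')$ of $G_{/e}$ with $\tilde{e} \in W'$. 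Under $\phi$ we have $|W'| = |W| - (|e| - 1)$, $|F'| = |F| - 1$, and, crucially, $k(W', F') = k(W, F)$, because the vertices in $e$ already lie in the same connected component of $(W, F)$ (they are joined by $e$), so identifying them does not merge distinct components.

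Consequently, the subgraphs with $e \in F$ contribute $v^{|e| - 1} y$ times the sum of $v^{|W'|} x^{k(W', F')} y^{|F'|}$ over all $(W', F') \subseteq G_{/e}$ with $\tilde{e} \in W'$. To identify this restricted sum, I would note that the subgraphs of $G_{/e}$ with $\tilde{e} \notin W'$ are exactly those whose edges lie in $E(G_{/e})$ and are incident only to vertices of $V(G_{/e}) \setminus \{\tilde{e}\} = V \setminus e$; these are precisely the subgraphs of $G_{\dagger e}$. Hence the restricted sum equals $H(G_{/e}) - H(G_{\dagger e})$, which yields \eqref{eq:theo_scp_rec_1}. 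The main obstacle is just the careful verification that $\phi$ is a bijection with the right image and that $k$ is preserved; everything else is bookkeeping of exponents. The argument works uniformly for loops, ordinary edges and hyperedges, with the factor $v^{|e| - 1}$ accounting for the number of vertices collapsed by the contraction.
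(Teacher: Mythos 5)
Your proposal is correct and follows essentially the same route as the paper: both partition the subgraphs of $G$ according to whether they contain $e$ (and which of its vertices), identify the $e$-free subgraphs with those of $G_{-e}$, put the $e$-containing subgraphs in weight-preserving bijection (up to the factor $v^{\abs{e}-1}y$) with the subgraphs of $G_{/e}$ containing the contracted vertex, and use $H(G_{\dagger e})$ to subtract off the subgraphs of $G_{/e}$ avoiding that vertex. The paper packages this bookkeeping in the notation $[W', F']$, but the underlying decomposition and the verification that contraction preserves the number of components are the same as yours.
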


\begin{proof}
The second equality holds as the subgraphs in different components can be chosen independently from each other and the third one holds by definition. Therefore, it only remains to show the first equality.

Let $[W', F']$ be the subgraph counting polynomial counting only those subgraphs including exactly the vertex set $W'$ from the vertices incident to $e$ and exactly the edge set $F'$ from the edge set consisting only of the edge $e$, i.e. those subgraphs $H = (W, F)$ with $W \cap e = W'$ and $F \cap \{e\} = F'$. We determine the subgraphs counted by the graphs arising from deletion, contraction and extraction of $e$.

The subgraphs of $G_{-e}$ are the subgraphs of $G$ not including $e$ but a (possible empty) subset of the incident vertices of $e$, i.e.
\begin{alignat*}{2}
& H(G_{-e}) &&= \sum_{U \subseteq e}{[U, \emptyset]}. \\
\intertext{The subgraphs of $G_{/e}$ can be include the vertex $w$, to which $e$ is contracted, or not. In the first case, the subgraphs (including $w$) can be mapped to subgraphs including $e$ and its incident vertices (contracting $e$ keeps the connection properties), but dived by $v^{\abs{e}-1} y$ (all vertices except one and $e$ are not counted). In the second case, the subgraphs (not including $w$) are the subgraphs of $G$ not including $e$ or one of its incident vertices. Combining both we get}
& H(G_{/e}) &&= \frac{[e, \{e\}]}{v^{\abs{e}-1} y} + [\emptyset, \emptyset]. \\
\intertext{The subgraphs of $G_{\dagger e}$ are the subgraphs of $G$ not including $e$ or one of its incident vertices, i.e.}
& H(G_{\dagger} e) &&= [\emptyset, \emptyset].
\end{alignat*}
Consequently, the recurrence relation equals the sum of the distinct cases:
\begin{align*}
H(G_{-e}) + v^{\abs{e} - 1} y \cdot H(G_{/e}) - v^{\abs{e} - 1} y \cdot H(G_{\dagger e}) 
&= \sum_{U \subseteq e}{[U, \emptyset]} + [e, \{e\}] \\
&= H(G). \qedhere
\end{align*}
\end{proof}

\begin{coro}
\label{coro:rel_scp_eep}
Let $G = (V, E)$ be a graph. The subgraph counting polynomial $H(G, v, x, y)$ and the edge elimination polynomial  $\xi(G, x, y, z)$ are equivalent graph polynomials related by
\begin{alignat}{2}
& H(G, v, x, y) &&= v^{\abs{V}} \cdot \xi(G, \frac{1 + vx}{v}, y, - \frac{y}{v}), \label{eq:coro_rel_scp_eep} \\
& \xi(G, x, y, z) &&= (x-y)^{\abs{V}} \cdot H(G, \frac{1}{x - y}, y, \frac{z}{x-y}). \label{eq:coro_rel_eep_scp}
\end{alignat}
\end{coro}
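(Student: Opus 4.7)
The plan is to verify each substitution formula by showing that the right-hand side satisfies the recurrence characterizing the polynomial on the left-hand side; uniqueness of such a recurrence, applied inductively on the number of edges and reducing edge-less graphs to copies of $K_1$ via multiplicativity, then forces the two equalities.

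For Equation~(\ref{eq:coro_rel_scp_eep}), I would set $P(G,v,x,y) := v^{\abs{V(G)}} \cdot \xi(G, \frac{1+vx}{v}, y, -\frac{y}{v})$ and verify the three properties from Theorem~\ref{theo:scp_rec}. The base case gives $P(K_1) = v \cdot \frac{1+vx}{v} = 1 + vx$, matching Equation~(\ref{eq:theo_scp_rec_3}); multiplicativity follows from that of $\xi$ combined with the additivity of $\abs{V}$ under disjoint union. For the edge recurrence, I apply Equation~(\ref{eq:defi_eep_1}) to $\xi$ with the substituted arguments---so that the contraction coefficient becomes $y$ and the extraction coefficient becomes $-y/v$---and multiply through by $v^{\abs{V(G)}}$. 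The essential manipulation is the redistribution of the prefactor $v^{\abs{V(G)}}$ into $v^{\abs{V(G_\star)}}$ for each of $G_{-e}$, $G_{/e}$, and $G_{\dagger e}$, using that deletion preserves $\abs{V}$, contraction reduces it by $\abs{e}-1$, and extraction reduces it by $\abs{e}$. The leftover $v$-powers then combine with the substituted coefficients to give exactly $v^{\abs{e}-1}y$ on the contraction term and $-v^{\abs{e}-1}y$ on the extraction term, reproducing Equation~(\ref{eq:theo_scp_rec_1}).

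For Equation~(\ref{eq:coro_rel_eep_scp}), I would proceed symmetrically: define $Q(G,x,y,z) := (x-y)^{\abs{V(G)}} \cdot H(G, \frac{1}{x-y}, y, \frac{z}{x-y})$ and verify Definition~\ref{defi:eep}. The base case gives $Q(K_1) = (x-y)(1 + \frac{y}{x-y}) = x$; multiplicativity is immediate; and the edge recurrence is obtained by applying Theorem~\ref{theo:scp_rec} to $H$ under the substitution, multiplying by $(x-y)^{\abs{V(G)}}$, and carrying out the same vertex-count bookkeeping as before.

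The main obstacle I expect is precisely this bookkeeping of $v$- and $(x-y)$-powers across the three arising graphs: the edge recurrences of $\xi$ and $H$ differ both by a variable substitution and by the $\abs{e}$-dependent factor appearing only in the recurrence for $H$, so one must verify that the scalar prefactor is exactly what is needed to absorb these $\abs{e}$-dependent factors and convert one recurrence into the other. Some care is also required to work in a Laurent polynomial ring during the intermediate steps, with the negative powers cancelling at the end because the target polynomials are known a priori to lie in the ordinary polynomial ring.
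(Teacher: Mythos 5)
Your treatment of Equation~(\ref{eq:coro_rel_scp_eep}) is correct and is essentially the paper's own argument read in the opposite direction: the paper divides $H$ by $v^{\abs{V}}$ and checks that the quotient obeys the recurrence of $\xi(G, \frac{1+vx}{v}, y, -\frac{y}{v})$, whereas you multiply $\xi$ by $v^{\abs{V}}$ and check the recurrence of $H$; the vertex-count bookkeeping ($\abs{V(G_{-e})}=\abs{V}$, $\abs{V(G_{/e})}=\abs{V}-\abs{e}+1$, $\abs{V(G_{\dagger e})}=\abs{V}-\abs{e}$) is identical, as is the appeal to the base case and multiplicativity.

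The gap is in your second step. If you actually carry out ``the same vertex-count bookkeeping'' for $Q(G)=(x-y)^{\abs{V}}\cdot H(G,\frac{1}{x-y},y,\frac{z}{x-y})$, the contraction coefficient $v^{\abs{e}-1}\cdot(\text{edge variable of }H)$ becomes $(\frac{1}{x-y})^{\abs{e}-1}\cdot\frac{z}{x-y}$, and after absorbing $(x-y)^{\abs{e}-1}$ resp.\ $(x-y)^{\abs{e}}$ into $Q(G_{/e})$ resp.\ $Q(G_{\dagger e})$ you get $Q(G)=Q(G_{-e})+\frac{z}{x-y}\cdot Q(G_{/e})-z\cdot Q(G_{\dagger e})$, which is \emph{not} the recurrence $\xi(G)=\xi(G_{-e})+y\cdot\xi(G_{/e})+z\cdot\xi(G_{\dagger e})$ of Definition~\ref{defi:eep}. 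The verification fails because Equation~(\ref{eq:coro_rel_eep_scp}) is false as printed: for $G=K_2$ one has $\xi(K_2,x,y,z)=x^2+xy+z$, while $(x-y)^2 H(K_2,\frac{1}{x-y},y,\frac{z}{x-y})=x^2+\frac{yz}{x-y}$. Genuinely inverting Equation~(\ref{eq:coro_rel_scp_eep}) (solve $Y=y$, $-\frac{Y}{v}=z$, $\frac{1+vX}{v}=x$ for $v,X,Y$) gives $v=-\frac{y}{z}$ and $X=x+\frac{z}{y}$, hence $\xi(G,x,y,z)=(-\frac{z}{y})^{\abs{V}}\cdot H(G,-\frac{y}{z},x+\frac{z}{y},y)$, in analogy with Equation~(\ref{eq:theo_rel_eep_tcp}); your symmetric recurrence check does go through for this corrected identity, with the contraction and extraction coefficients coming out to exactly $y$ and $z$. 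The paper only proves the first identity and dismisses the second with ``algebraic transformations'', so your plan, executed honestly, would expose an error in the printed statement rather than prove it --- but as written, the assertion that the bookkeeping ``reproduces Definition~\ref{defi:eep}'' is the step that fails.
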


\begin{proof}
Let $\bar{H} = \bar{H}(G, v, x, y) = v^{-\abs{V}}\cdot H(G, v, x, y)$. We show that $\bar{H}(G, v, x, y) = \xi(G, \frac{1 + vx}{v}, y, - \frac{y}{v})$, from this the first equality follows directly and the second equality follows by algebraic transformations. The graph polynomials $\bar{H}(G, v, x, y)$ and $\xi(G, \frac{1 + vx}{v}, y, - \frac{y}{v})$ have the same initial value ($\frac{1 + vx}{v}$) and are both multiplicative in components. Hence, it only remains to show that $\bar{H}(G, v, x, y)$ satisfies the same recurrence relation as $\xi(G, \frac{1 + vx}{v}, y, - \frac{y}{v})$. This follows from the recurrence relation of $H(G) = H(G, v, x, y)$ given in the theorem above by
\begin{align*}
\bar{H}(G)
&= v^{- \abs{V}} \cdot H(G) \\
&= v^{- \abs{V}} \cdot [H(G_{-e}) + v^{\abs{e} - 1} y \cdot H(G_{/e}) - v^{\abs{e} - 1} y \cdot H(G_{\dagger e})] \\
&= v^{- \abs{V}} \cdot H(G_{-e}) + v^{- \abs{V} + \abs{e} - 1} y \cdot H(G_{/e}) \\
& \eqspace - v^{- \abs{V} + \abs{e} - 1} y \cdot H(G_{\dagger e}) \\
&= v^{- \abs{V(G_{-e})}} \cdot H(G_{-e}) + yv^{- \abs{V(G_{/e})}} \cdot H(G_{/e}) \\
& \eqspace - \frac{y}{v} v^{- \abs{V(G_{\dagger e})}} \cdot H(G_{\dagger e}) \\
&= \bar{H}(G_{-e}) + y \cdot \bar{H}(G) - \frac{y}{v} \cdot \bar{H}(G). \qedhere 
\end{align*}
\end{proof}
% !TeX root = paper.tex

\section{The trivariate chromatic polynomial}

The bad coloring polynomial generalizes the chromatic polynomial by counting the number of monochromatic edges (also known as ``bad edges''). The bivariate chromatic polynomial generalizes the chromatic polynomial by allowing a subset of the colors to appear in monochromatic edges. We show that the graph polynomial combining both generalizations, the trivariate chromatic polynomial\footnote{The present author has introduced this graph polynomial under the name ``bivariate bad coloring polynomial'' in several talks, first time at a conference at the Zhejiiang Normal University (Jinhua, China) in 2010. Because of the conflict between a ``bivariate'' polynomial and using three variables, the name is changed into the same used by \citeauthor{white2011} \cite{white2011} for an almost similar graph polynomial.}, is equivalent to the edge elimination polynomial. 

\begin{defi}
\label{defi:tcp_def}
Let $G = (V, E)$ be a graph. The \emph{trivariate chromatic polynomial} $\tilde{P}(G, x, y, z)$ is defined (for $x, y \in \mathds{N}$) as
\begin{align}
\tilde{P}(G, x, y, z) 
&= \sum_{\phi \colon V \rightarrow \{1, \ldots, x\}}{\prod_{\substack{e \in E \\ \exists c \leq y \forall v \in e \colon \phi(v) = c}}{z}}.
\end{align}
\end{defi}

The above definition equals the ``trivariate chromatic polynomial'' $P(G, p, q, t)$ of \citeauthor{white2011} \cite[Section 6]{white2011} except a change of the first two variables, that is $P(G, p, q, t) = \tilde{P}(G, q, p, t)$. Furthermore, it is utilized by \citeauthor{garijo2011} \cite[Theorem 34]{garijo2011} to prove that the edge elimination polynomial can be stated as counting graph homomorphisms, more precisely $\hom(G, K^1_{q-p} + K^y_p) = \tilde{P}(G, q, p, y)$.

The trivariate chromatic polynomial is the generating function for the number of monochromatic edges, whose incident vertices are mapped to the same color $c \leq y$, in the (vertex) colorings with (at most) $x$ colors.

In fact, we first can select a set of vertices which we color (independently) by one of the $x-y$ colors $y+1, \ldots, x$ and then color the remaining vertices with one of the $y$ colors $1, \ldots, y$ enumerating the number of monochromatic edges. Thus, the trivariate chromatic polynomial can be stated as
\begin{align}
\tilde{P}(G, x, y, z)
&= \sum_{W \subseteq V}{(x-y)^{\abs{W}} \cdot \tilde{\chi}(G_{-W}, y, z)}, \label{eq:tP_exp_1}
\end{align}
and it is therefore related to the bad coloring polynomial $\tilde{\chi}(G, y, z)$ and the bivariate chromatic polynomial $P(G, x, y)$ by
\begin{alignat}{2}
& \tilde{\chi}(G, x, z) &&= \pcoef{v^{0}}{\tilde{P}(G, v+x, x, z)} = \tilde{P}(G, x, x, z), \\
& P(G, x, y) &&= \tilde{P}(G, x, y, 0).
\end{alignat}

\begin{theo}
\label{theo:tcp_rec}
Let $G = (V, E)$, $G^1$, $G^2$ be graphs and $e \in E$ an edge of $G$. The trivariate chromatic polynomial $\tilde{P}(G) = \tilde{P}(G, x, y, z)$ satisfies
\begin{align}
& \tilde{P}(G) = \tilde{P}(G_{-e}) + (z-1) \cdot \tilde{P}(G_{/e}) + (1-z) (x-y) \cdot \tilde{P}(G_{\dagger e}), \label{eq:theo_tcp_rec_1} \\
& \tilde{P}(G^1 \cdotcup G^2) = \tilde{P}(G^1) \cdot \tilde{P}(G^2), \label{eq:theo_tcp_rec_2} \\
& \tilde{P}(K_1) = x. \label{eq:theo_tcp_rec_3}
\end{align}
\end{theo}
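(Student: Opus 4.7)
The initial value $\tilde{P}(K_1) = x$ is immediate from the definition, as there are $x$ colorings of a single vertex and the edge product is empty. The multiplicativity (\ref{eq:theo_tcp_rec_2}) holds because a coloring of a disjoint union is a pair of colorings of the two components and every edge lies entirely within one component, so the edge product factors. The substantive claim is the three-term recurrence (\ref{eq:theo_tcp_rec_1}), which I would prove directly from the definition of $\tilde{P}$ rather than via the expansion (\ref{eq:tP_exp_1}).

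I would partition the colorings $\phi \colon V \to \{1, \ldots, x\}$ into three classes according to the restriction of $\phi$ to $e$: (a) $\phi$ is not constant on $e$; (b) $\phi$ is constant on $e$ with common value $\leq y$; (c) $\phi$ is constant on $e$ with common value $> y$. Let $Q(\phi)$ denote the partial product of $z$ over bad edges $e' \in E \setminus \{e\}$. The contribution of $\phi$ to $\tilde{P}(G)$ is $z \cdot Q(\phi)$ in case (b) and $Q(\phi)$ in cases (a) and (c), while the contribution to $\tilde{P}(G_{-e})$ is $Q(\phi)$ in every case (since there $e$ is absent). Subtracting gives
\[
\tilde{P}(G) - \tilde{P}(G_{-e}) = (z-1) \sum_{\phi \in (b)} Q(\phi),
\]
so everything reduces to identifying the right-hand sum.

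For this I would exploit the natural bijection between class-(b) colorings of $G$ and colorings $\psi$ of $G_{/e}$ whose merged vertex $w$ satisfies $\psi(w) \leq y$: keep $\phi$ on $V \setminus e$ and set $\psi(w)$ to the common color on $e$. Because all vertices of $e$ share the color $c = \psi(w)$, an edge $e' \neq e$ of $G$ is monochromatic with color $\leq y$ under $\phi$ exactly when its image in $G_{/e}$ is monochromatic with color $\leq y$ under $\psi$, so $Q(\phi)$ matches the edge-product of $\psi$ in $G_{/e}$. Hence $\sum_{\phi \in (b)} Q(\phi) = \tilde{P}(G_{/e}) - T$, where $T$ is the contribution of colorings of $G_{/e}$ with $\psi(w) > y$. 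In any such $\psi$ no edge incident to $w$ can be bad, so $T$ decouples as $(x-y) \cdot \tilde{P}(G_{\dagger e})$, the factor $(x-y)$ counting the admissible colors for $w$. Substituting and rearranging yields precisely (\ref{eq:theo_tcp_rec_1}). The main care needed is to verify that the bijection truly preserves the multiset of bad edges other than $e$; this is routine for ordinary two-element edges and remains valid for hypergraph edges, since all vertices of $e$ collapse to $w$ with a single color.
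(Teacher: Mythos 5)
Your proof is correct and follows essentially the same route as the paper: both partition the colorings according to whether $e$ is non-monochromatic, monochromatic with color $\leq y$, or monochromatic with color $> y$, and then match these classes against the colorings of $G_{-e}$, $G_{/e}$ and $G_{\dagger e}$. Your bookkeeping (first isolating $\tilde{P}(G)-\tilde{P}(G_{-e})$, then splitting $\tilde{P}(G_{/e})$ by the color of the merged vertex) is only a cosmetic rearrangement of the paper's $p_1+p_2+p_3$ decomposition.
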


\begin{proof}
We only prove the first equality, the other two follow from the definition and are in full analogy to the chromatic polynomial and the mentioned generalizations.

For the coloring of the vertices incident to the edge $e$ there are the following three distinct cases:
\begin{enumerate}
\item $e$ is not monochromatic, i.e. not all vertices of $e$ are mapped to the same color $c$: $\nexists c \colon \forall v \in e \colon \phi(v) = c$,
\item $e$ is ``bad monochromatic'', i.e. all vertices of $e$ are mapped to the same color $c \leq y$: $\exists c \leq y \colon \forall v \in e \colon \phi(v) = c$,
\item $e$ is ``good monochromatic'', i.e. all vertices of $e$ are mapped to the same color $c > y$: $\exists c > y \colon \forall v \in e \colon \phi(v) = c$.
\end{enumerate}

Let $p_1$, $p_2$ and $p_3$ be the trivariate chromatic polynomial of $G$ enumerating exactly those colorings of $G$ corresponding to the first, second and third case, respectively. Obviously, $\tilde{P}(G) = p_1 + p_2 + p_3$.

Each coloring of the vertices of $G_{-e}$ corresponds to a coloring of the vertices of $G$, where the number of bad monochromatic edges is counted correctly, except in the second case, that is the vertices incident to $e$ (in $G$) are colored by the same color $c \leq y$. Then $e$ is not counted as bad monochromatic (as it does not appear in the graph):
\begin{align*}
\tilde{P}(G_{-e}) = p_1 + \frac{p_2}{z} + p_3.
\end{align*}

Each coloring of the vertices of $G_{/e}$ corresponds to a coloring of the vertices of $G$, where all vertices incident to $e$ are mapped to the color $c$, to which the vertex arising through the contraction of $e$ is mapped, which covers the second and third case. But again, in the second case the edge is not counted as bad monochromatic:
\begin{align*}
\tilde{P}(G_{/e}) = \frac{p_2}{z} + p_3.
\end{align*}

Each coloring of the vertices of $G_{\dagger e}$ corresponds to a colorings of the vertices of $G$ excluding the vertices incident to $e$. If we assume that the vertices of $e$, as in the third case, are all colored by the same color $c > y$, then there are $x - y$ for them:
\begin{align*}
\tilde{P}(G_{\dagger e}) = \frac{p_3}{x-y}.
\end{align*}

The statement follows by
\begin{align*}
\tilde{P}(G_{-e}) + (z-1) \cdot \tilde{P}(G_{/e}) + (1-z) (x-y) \cdot \tilde{P}(G_{\dagger e}) &= p_1 + p_2 + p_3 \\
&= \tilde{P}(G). \qedhere
\end{align*}
\end{proof}

\begin{coro}
\label{coro:rel_tcp_eep}
Let $G = (V, E)$ be a graph. The trivariate chromatic polynomial $\tilde{P}(G, x, y, z)$ and the edge elimination polynomial $\xi(G, x, y, z)$ are equivalent graph polynomials related by
\begin{alignat}{2}
& \tilde{P}(G, x, y, z) &&= \xi(G, x, z-1, (1-z)(x-y)), \label{eq:theo_rel_tcp_eep} \\
& \xi(G, x, y, z) &&= \tilde{P}(G, x, x + \frac{z}{y}, y+1). \label{eq:theo_rel_eep_tcp}
\end{alignat}
\end{coro}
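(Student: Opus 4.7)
The plan is to imitate the proof of Corollary \ref{coro:rel_scp_eep}: verify that $\tilde{P}(G,x,y,z)$ and $\xi\bigl(G,x,z-1,(1-z)(x-y)\bigr)$ agree on $K_1$, are both multiplicative with respect to disjoint unions, and satisfy the same linear recurrence relation for an edge $e$. All three ingredients are essentially already at hand. On $K_1$ both polynomials evaluate to $x$ (from Equation \ref{eq:theo_tcp_rec_3} and Equation \ref{eq:defi_eep_3}, the latter because substituting variables in $x$ leaves it unchanged). Multiplicativity in components transfers from $\xi$ under any substitution of its three arguments, and it holds for $\tilde{P}$ by Equation \ref{eq:theo_tcp_rec_2}.

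The only real check is that both sides obey the same three-term edge-elimination recurrence. Comparing Equation \ref{eq:theo_tcp_rec_1} with Equation \ref{eq:defi_eep_1} evaluated at $(y,z)\mapsto(z-1,(1-z)(x-y))$, the coefficients of $\tilde{P}(G_{-e})$, $\tilde{P}(G_{/e})$ and $\tilde{P}(G_{\dagger e})$ match exactly the coefficients $1$, $z-1$ and $(1-z)(x-y)$ appearing in the recurrence for $\xi$ after the substitution. An induction on $\abs{E(G)}$, using multiplicativity to split off the isolated vertices that appear after edge extractions, then yields Equation \ref{eq:theo_rel_tcp_eep}.

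For the second identity, Equation \ref{eq:theo_rel_eep_tcp}, no new combinatorics is needed: it is obtained by inverting the substitution used in the first. Setting $X=x$, $Y=z-1$, $Z=(1-z)(x-y)$ and solving for $(x,y,z)$ in terms of $(X,Y,Z)$ gives $z=Y+1$ and $y=X+Z/Y$, from which
\begin{equation*}
\xi(G,X,Y,Z)=\tilde{P}\!\left(G,X,X+\tfrac{Z}{Y},Y+1\right)
\end{equation*}
follows by substitution into Equation \ref{eq:theo_rel_tcp_eep}.

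There is no real obstacle. The only mildly delicate point is bookkeeping of the substitution when checking the recurrence; in particular one must be careful that the equality of polynomials is derived in the polynomial ring, even though the combinatorial definition of $\tilde{P}$ is stated only for $x,y\in\mathds{N}$, so one should remark (as is customary for the chromatic and bivariate chromatic polynomials) that $\tilde{P}(G,x,y,z)$ is polynomial in $x$ and $y$, so the identity of polynomials is forced by agreement on sufficiently many integer points.
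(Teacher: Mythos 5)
Your proposal is correct and follows essentially the same route as the paper: the paper's proof simply states that the first identity follows from Theorem \ref{theo:tcp_rec} (same initial value, same multiplicativity, same recurrence coefficients after the substitution $y\mapsto z-1$, $z\mapsto(1-z)(x-y)$) and that the second follows by algebraic transformation, i.e.\ by inverting that substitution exactly as you do. Your added remarks on induction over $\abs{E(G)}$ and on polynomiality in $x,y$ merely make explicit what the paper leaves implicit.
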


\begin{proof}
The first equality follows directly from the theorem above and the second one by algebraic transformations.
\end{proof}
%\input{bivariate_bad_coloring_polynomial}
% !TeX root = paper.tex

\section{Properties}

In this section we prove some properties valid for the edge elimination polynomial and graph polynomials equivalent to it, by using the combinatorial interpretations of the newly defined graph polynomials.

\subsection{Relation to the subgraph component polynomial}

The \emph{subgraph component polynomial} $Q(G, x, y)$ \cite{tittmann2011, averbouch2010b} of a graph $G = (V, E)$ is the generating function for the number of connected components in the subgraphs induced by the vertex subsets:
\begin{align}
Q(G, v, x) = \sum_{W \subseteq V}{v^{\abs{W}} x^{k(\vig{G}{W})}}.
\end{align}

It is known that the subgraph component polynomial of the line graph $L(G)$ can be derived from the edge elimination polynomial of $G$ %\cite[Theorem 4]{averbouch2010b}, 
\cite[Theorem 23]{tittmann2011}. We show that for forests the subgraph counting polynomial and the subgraph component polynomial are equivalent to each other.

\begin{theo}
\label{theo:rel_H_Q}
Let $F = (V, E)$ be a forest. The subgraph counting polynomial $H(F, v, x, y)$ and the subgraph component polynomial $Q(F, v, x)$ are related by
\begin{alignat}{2}
& H(F, v, x, y) &&= Q(F, v (x + y), \frac{x}{x+y}), \label{eq:theo_rel_H_Q} \\
& Q(F, v, x) &&= H(F, v, x, 1-x). \label{eq:theo_rel_Q_H}
\end{alignat}
\end{theo}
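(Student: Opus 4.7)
The plan is to use Equation (\ref{eq:H_exp_1}) expressing $H$ as a weighted sum of the Potts model over induced subgraphs, and then exploit the defining property of forests that every subgraph is again a forest, so the number of connected components is determined by the number of vertices and edges.

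First I would start from
\begin{align*}
H(F, v, x, y) = \sum_{W \subseteq V} v^{\abs{W}} \cdot Z(\vig{F}{W}, x, y).
\end{align*}
Since $F$ is a forest, for every $W \subseteq V$ the induced subgraph $\vig{F}{W}$ is a forest as well, and every spanning subgraph $(W, A)$ of $\vig{F}{W}$ is a forest too. For any forest $(W, A)$ we have $k((W, A)) = \abs{W} - \abs{A}$, so
\begin{align*}
Z(\vig{F}{W}, x, y)
&= \sum_{A \subseteq E(\vig{F}{W})} x^{\abs{W} - \abs{A}} y^{\abs{A}} \\
&= x^{\abs{W}} \left(1 + \frac{y}{x}\right)^{\abs{E(\vig{F}{W})}}.
\end{align*}
Using once more that $\vig{F}{W}$ is a forest gives $\abs{E(\vig{F}{W})} = \abs{W} - k(\vig{F}{W})$, and substituting this yields
\begin{align*}
Z(\vig{F}{W}, x, y) = x^{k(\vig{F}{W})} (x+y)^{\abs{W} - k(\vig{F}{W})}.
\end{align*}

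Next I would plug this back into the sum and regroup the exponents so that $\abs{W}$ ends up on $v(x+y)$ and $k(\vig{F}{W})$ on $x/(x+y)$:
\begin{align*}
H(F, v, x, y)
&= \sum_{W \subseteq V} v^{\abs{W}} x^{k(\vig{F}{W})} (x+y)^{\abs{W} - k(\vig{F}{W})} \\
&= \sum_{W \subseteq V} \bigl(v(x+y)\bigr)^{\abs{W}} \left(\frac{x}{x+y}\right)^{k(\vig{F}{W})} \\
&= Q\!\left(F, v(x+y), \frac{x}{x+y}\right),
\end{align*}
which is Equation (\ref{eq:theo_rel_H_Q}). Equation (\ref{eq:theo_rel_Q_H}) follows immediately by specialising to $y = 1 - x$, since then $x + y = 1$ and $v(x+y) = v$, $x/(x+y) = x$.

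The only nontrivial step is the second equality above, and this is exactly where the forest hypothesis is used twice — once to guarantee that every subgraph is acyclic, and once to rewrite the number of edges of $\vig{F}{W}$ through $\abs{W} - k(\vig{F}{W})$. Beyond that, the proof is a pure algebraic rearrangement. I do not expect any genuine obstacle; the statement essentially collapses the two degrees of freedom of $H$ (edges and components) onto the single component-counting degree of freedom of $Q$ via the forest identity $k = \abs{V} - \abs{E}$.
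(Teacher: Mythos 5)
Your proof is correct and follows essentially the same route as the paper: both expand $H$ via Equation \eqref{eq:H_exp_1} as a sum of Potts models of induced subgraphs, use the forest identity $k = \abs{W} - \abs{A}$ to collapse $Z(\vig{F}{W}, x, y)$ into $x^{k(\vig{F}{W})}(x+y)^{\abs{W}-k(\vig{F}{W})}$, and then regroup exponents to match $Q$. The only difference is that the paper asserts the form of the Potts model of a forest in one line, whereas you derive it explicitly; your version is slightly more self-contained but not a different argument.
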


\begin{proof}
We use the expansion of the subgraph counting polynomial as a sum of Potts models of induced subgraphs given in Equation \eqref{eq:H_exp_1}. The Potts model of a forest depends only on the number of vertices and the number of connected components, hence we have
\begin{align*}
H(F, v, x, y)
&= \sum_{W \subseteq V}{v^{\abs{W}} \cdot Z(\vig{F}{W}, x, y)} \\
&= \sum_{W \subseteq V}{v^{\abs{W}} x^{k(\vig{F}{W})} (x+y)^{\abs{W} - k(\vig{F}{W})}}.
\end{align*} 
Then Equation \eqref{eq:theo_rel_H_Q} follows by
\begin{align*}
Q(F, v (x + y), \frac{x}{x+y})
&= \sum_{W \subseteq V}{(v (x+y))^{\abs{W}} \left( \frac{x}{x+y} \right)^{k(\vig{F}{W})}} \\
&= \sum_{W \subseteq V}{v^{\abs{W}} x^{k(\vig{F}{W})} (x+y)^{\abs{W} - k(\vig{F}{W})}} \\
&= H(F, v, x, y),
\end{align*}
and Equation \eqref{eq:theo_rel_Q_H} by
\begin{align*}
H(F, v, x, 1-x)
&= \sum_{W \subseteq V}{v^{\abs{W}} x^{k(\vig{F}{W})} (1)^{\abs{W} - k(\vig{F}{W})}} \\
&= \sum_{W \subseteq V}{v^{\abs{W}} x^{k(\vig{F}{W})}} \\
& = Q(F, v, x). \qedhere
\end{align*}
\end{proof}

\begin{rema}
For general graphs neither the subgraph component polynomial can be obtained from the subgraph counting polynomial nor the other way around. This can be observed as follows: The graphs $G^1$ and $G^2$ in Figure \ref{fig:graphs_rel_scp_scomp} \cite[$G_3$ and $G_4$ in Figure 2]{trinks2012} have the same subgraph counting polynomial (because both have the same edge elimination polynomial), but different subgraph component polynomials, notice for example the coefficient of $v^6 x^1$. The graphs $G^3$ and $G^4$ in Figure \ref{fig:graphs_rel_scp_scomp} \cite[$G_7$ and $G_8$ in Figure 1]{averbouch2010b} have the same subgraph component polynomial, but different subgraph counting polynomials (because both have different chromatic polynomials). For non-simple graphs this follows already from the fact that the subgraph component polynomial does not consider parallel edges and loops, which the subgraph counting polynomial does.
\end{rema}

\begin{figure}
\begin{center}
	\includegraphics{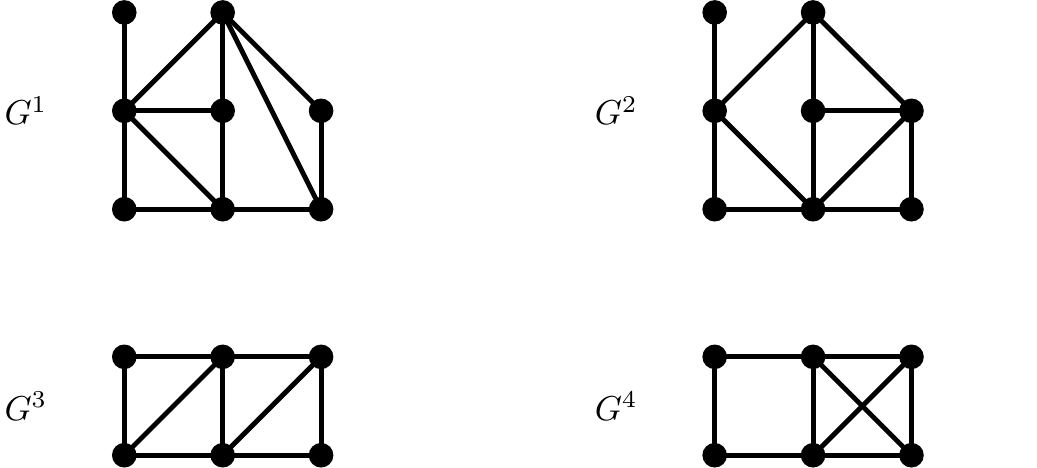}
\end{center}
\caption{$G^1$ and $G^2$ are graphs with same subgraph counting polynomial, but different subgraph component polynomials, $G^3$ and $G^4$ are graphs with same subgraph component polynomial, but different subgraph counting polynomials.}
\label{fig:graphs_rel_scp_scomp}
\end{figure}

\subsection{Polynomial reconstructibility}

The \emph{reconstruction conjecture} of \citeauthor{kelly1957} \cite{kelly1957} and \citeauthor{ulam1960} \cite{ulam1960} states that every simple graph $G = (V, E)$ with at least three vertices can be reconstructed from the isomorphism classes of its \emph{deck} $\mathcal{D}(G)$, which is the set of vertex-deleted subgraphs, i.e. $\mathcal{D}(G) = \{G_{-v} \mid v \in V\}$. (In the following we assume only simple graph with at least three vertices and do not mention this restriction.) \citeauthor{kotek2009} \cite[Theorem 2.5]{kotek2009} shoes that the edge elimination polynomial of a simple graph is reconstructible from the isomorphism classes of the deck of the graph.

For a graph polynomial $P(G)$, reconstruction can be ``restricted'' as follows: Can the value of $P(G)$ for a graph be reconstructed from its \emph{polynomial deck} $\mathcal{D}_P(G)$, which is the multiset of graph polynomials of the graphs in the deck, i.e. $\mathcal{D}_P(G) = \{P(G_{-v}) \mid v \in V\}$. 

The probably first affirmative statement in this direction is given by \citeauthor{tutte1967} for the \emph{rank polynomial} \cite{tutte1967, tutte1979}. Additionally knowing the number of vertices and of connected components, it is equivalent to the Potts model, and consequently this is also reconstructible from its polynomial deck.

\begin{theo}
\label{theo:recon_H}
Let $G = (V, E)$ be a simple graph with at least three vertices. The subgraph counting polynomial $H(G, v, x, y)$ of $G$ is reconstructible from the polynomial deck $\mathcal{D}_H(G)$.
\end{theo}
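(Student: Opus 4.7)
The plan is to split $H(G)$ via Equation~\eqref{eq:H_exp_1} into its spanning and non-spanning contributions,
\begin{align*}
H(G, v, x, y) = v^{\abs{V}} Z(G, x, y) + \sum_{W \subsetneq V} v^{\abs{W}} Z(\vig{G}{W}, x, y),
\end{align*}
and to reconstruct each summand from $\mathcal{D}_H(G)$ separately. The number of vertices $n = \abs{V}$ will be read off the deck as $1 + \pdeg{v}{H(G_{-u})}$ for any deck element, which is well-defined since $n \geq 3$.

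For the non-spanning part I will use a Kelly-type counting argument. For each $W \subsetneq V$, the induced subgraph $\vig{G}{W}$ appears as an induced subgraph of exactly $n - \abs{W}$ of the graphs $G_{-u}$ (those with $u \notin W$). Summing the multiset $\mathcal{D}_H(G)$ (a sum determined by the deck alone) therefore yields
\begin{align*}
\sum_{u \in V} H(G_{-u}, v, x, y) = \sum_{W \subsetneq V} (n - \abs{W}) \, v^{\abs{W}} Z(\vig{G}{W}, x, y).
\end{align*}
Extracting the coefficient of $v^k$ for each $k \in \{0, \ldots, n-1\}$ and dividing by the known scalar $n - k$ recovers $\sum_{\abs{W} = k} Z(\vig{G}{W}, x, y) = \pcoef{v^k}{H(G, v, x, y)}$, and reassembling across $k$ reconstructs the non-spanning contribution.

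The main obstacle is the spanning term $v^n Z(G, x, y)$, since $G$ itself never arises as an induced subgraph of any deck element. To handle it I will pass to the Potts model: Equation~\eqref{eq:H_Z}, applied to each $G_{-u}$, gives
\begin{align*}
Z(G_{-u}, x, y) = \pcoef{v^{n-1}}{H(G_{-u}, v, x, y)},
\end{align*}
so the polynomial deck $\mathcal{D}_Z(G)$ of the Potts model is extractable from $\mathcal{D}_H(G)$. Since the Potts model is itself reconstructible from its polynomial deck (as recalled above, via Tutte's reconstruction of the rank polynomial together with the standard reconstructibility of $\abs{V}$ and $k(G)$), the value $Z(G, x, y)$ is determined, and hence so is the spanning term $v^n Z(G, x, y)$. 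Adding the two reconstructed pieces then yields $H(G, v, x, y)$.
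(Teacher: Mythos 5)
Your proposal is correct and follows essentially the same route as the paper: the Kelly-type count recovering every coefficient of $v^k$ with $k<\abs{V}$ from the summed deck, and the recovery of the missing spanning term $v^{\abs{V}}Z(G,x,y)$ by extracting the Potts polynomial deck via Equation \eqref{eq:H_Z} and invoking Tutte's reconstructibility of the rank polynomial. You merely spell out the normalization by $n-k$ and the extraction of $n$ from the deck, which the paper leaves implicit.
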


\begin{proof}
We use the vertex subset expansion of the subgraph counting polynomial given in Equation \eqref{eq:H_exp_1}:
\begin{align*}
H(G, v, x, y)
&= \sum_{W \subseteq V}{v^{\abs{W}} \cdot Z(\vig{G}{W}, x, y)}.
\end{align*}
Consequently, in the sum of the polynomials in the polynomial deck each summand of $H(G, v, x, y)$ including $v^{i}$ arises $(\abs{V} - i)$-times. (Each such summand is part of the Potts model of a induced subgraph with $i$ vertices and each of these graphs is a induced subgraph of $\abs{V} - i$ graphs in the deck --- analogous to Kelly's Lemma \cite[Lemma]{kelly1957}.)

Hence, only the summands including $v^{\abs{V}}$ are missing, which correspond to the Potts model of $G$. From the subgraph counting polynomial of a graph we can calculate its Potts model by Equation \eqref{eq:H_Z}. Therefore, we can calculate from the polynomial deck for the subgraph counting polynomial the polynomial deck for the Potts model, and from this the Potts model of $G$ can be determined via the result of Tutte.
\end{proof}

\subsection{Degree sequence}

A graph invariant of typical interest with respect to the question whether it is encoded in a graph polynomial or not, is the degree sequence of a graph. This is the non-increasing sequence of degrees of the vertices of a graph and can be determined from the trivariate chromatic polynomial via the number of vertices of degree $i$.

\begin{theo}
\label{theo:degseq_H}
Let $G = (V, E)$ be a graph. The number of vertices with degree $i$ for $i \in \{0, \ldots, \abs{E}\}$, $d(G, i)$, is encoded in the trivariate chromatic polynomial $\tilde{P}(G, x, y, z)$:
\begin{align}
d(G, i) = \pcoef{v^{1} z^{\abs{E} - i}}{\tilde{P}(G, v+1, 1, z)},
\end{align}
where $\abs{E} = \pdeg{z}{\tilde{P}(G, v+1, 1, z)}$.
\end{theo}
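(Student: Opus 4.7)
The plan is to extract the coefficient $[v^1]\tilde{P}(G, v+1, 1, z)$ in closed form and read off the degree sequence from the resulting polynomial in $z$. The key tool is the vertex-subset expansion already established in Equation \eqref{eq:tP_exp_1}, namely
\[
\tilde{P}(G, x, y, z) = \sum_{W \subseteq V}{(x-y)^{\abs{W}} \cdot \tilde{\chi}(G_{-W}, y, z)}.
\]
Specializing this with $x = v+1$ and $y = 1$ turns $(x-y)^{\abs{W}}$ into $v^{\abs{W}}$, which is exactly the book-keeping variable I want. After that, the whole argument becomes the observation that a bad coloring with only one color available has nothing to count but its edges.

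Concretely, I would proceed as follows. First, I would substitute $x = v+1$, $y=1$ into \eqref{eq:tP_exp_1} to obtain $\tilde{P}(G, v+1, 1, z) = \sum_{W \subseteq V} v^{\abs{W}} \tilde{\chi}(G_{-W}, 1, z)$. Next, I would evaluate $\tilde{\chi}(H, 1, z)$ for an arbitrary graph $H$: with only one color available, every map $\phi\colon V(H) \to \{1\}$ makes every edge monochromatic, and there is exactly one such map, so $\tilde{\chi}(H, 1, z) = z^{\abs{E(H)}}$. Plugging this back in gives
\[
\tilde{P}(G, v+1, 1, z) = \sum_{W \subseteq V} v^{\abs{W}} z^{\abs{E(G_{-W})}}.
\]
Now I would extract the coefficient of $v^1$ by restricting to singletons $W = \{w\}$, using that $\abs{E(G_{-w})} = \abs{E} - \deg(w)$ since deletion of $w$ removes precisely the edges incident to $w$; this yields
\[
\pcoef{v^{1}}{\tilde{P}(G, v+1, 1, z)} = \sum_{w \in V}{z^{\abs{E} - \deg(w)}} = \sum_{i=0}^{\abs{E}}{d(G, i)\, z^{\abs{E} - i}},
\]
from which the claimed formula for $d(G,i)$ follows by reading off the coefficient of $z^{\abs{E}-i}$.

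Finally, I would justify the side statement $\abs{E} = \pdeg{z}{\tilde{P}(G, v+1, 1, z)}$ from the same expansion: the exponent $\abs{E(G_{-W})}$ never exceeds $\abs{E}$, with equality attained at $W = \emptyset$, so the $z$-degree of the whole polynomial is exactly $\abs{E}$. There is no serious obstacle here; the only mildly delicate point is making sure the degree convention used throughout the paper (edges incident to $v$, counting loops once) is consistent with $\abs{E(G_{-w})} = \abs{E} - \deg(w)$, which is exactly how $-w$ and ``degree'' are defined in the graph-theory preliminaries, so the identity holds for arbitrary (not necessarily simple) graphs.
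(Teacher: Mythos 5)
Your proof is correct and takes essentially the same route as the paper: both start from the vertex-subset expansion \eqref{eq:tP_exp_1}, specialize to $x = v+1$, $y = 1$, arrive at $\tilde{P}(G, v+1, 1, z) = \sum_{W \subseteq V}{v^{\abs{W}} z^{\abs{E(G_{-W})}}}$, and read off the coefficient of $v^{1} z^{\abs{E}-i}$ as the number of vertices whose deletion removes $i$ edges. The only (harmless) deviation is that you evaluate $\tilde{\chi}(H, 1, z) = z^{\abs{E(H)}}$ directly from the one-color coloring definition, whereas the paper routes through the edge-subset expansion of the bad coloring polynomial; your explicit justification of $\abs{E} = \pdeg{z}{\tilde{P}(G, v+1, 1, z)}$ is a small addition the paper leaves implicit.
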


Informal we do the following: We consider the number of bad monochromatic edges arising by a coloring using $v$ ``arbitrary'' colors and $1$ ``proper'' color. Each term including $v^{1}$ corresponds to a coloring where exactly one vertex is colored by one of the ``arbitrary'' colors and all other vertices are colored by the same ``proper'' color. Hence, all edges except the edges incident to the one ``arbitrary colored'' vertex are bad monochromatic, and their number is counted in the variable $z$.

\begin{proof}
We start with the expansion of the trivariate chromatic polynomial given in Equation \eqref{eq:tP_exp_1} and apply the edge subset expansion of the bad coloring polynomial \cite[Section 9.6.2]{ellis2011}:
\begin{align*}
\tilde{P}(G, x, y, z)
&= \sum_{W \subseteq V}{(x-y)^{\abs{W}} \cdot \tilde{\chi}(G_{-W}, y, z)} \\
&= \sum_{W \subseteq V}{(x-y)^{\abs{W}} \sum_{A \subseteq E(G_{-W})}{y^{k(G_{-W})} (z-1)^{\abs{A}}}}.
\end{align*}
For $\tilde{P}(G, v+1, 1, z)$ it follows
\begin{align*}
\tilde{P}(G, v+1, 1, z)
&= \sum_{W \subseteq V}{v^{\abs{W}} \sum_{A \subseteq E(G_{-W})}{1^{k(G_{-W})} (z-1)^{\abs{A}}}} \\
&= \sum_{W \subseteq V}{v^{\abs{W}} z^{\abs{E(G_{-W})}}}.
\end{align*}
Consequently, the coefficients in front of the terms $v^{1} z^{\abs{E}-i}$ count the number of vertices, whose deletion removes $i$ edges and hence the number of vertices with degree $i$, $d(G, i)$. 
\end{proof}
% !TeX root = paper.tex

\section{Conclusion}

We have presented two graph polynomials that are defined in different frameworks but equivalent to the edge elimination polynomial. Thereby we have related a definition using recurrence relations to one counting subgraphs and to another counting colorings. Furthermore, we have applied these definitions to prove properties valid for the whole class of ``edge elimination polynomials''.

We think the results are a good motivation for the searching and investigation of equivalent graph polynomials. As evidence for this, compare %for example
the proof for the fact that the degree sequence of forests is encoded in the edge elimination polynomial \cite[Theorem 36]{trinks2012} with the almost trivial proof of the more general result that the degree sequence of arbitrary graphs is encoded in the trivariate chromatic polynomial (Theorem \ref{theo:degseq_H}). 

%\addcontentsline{toc}{section}{References}
%\bibliographystyle{plainurl}
%\bibliography{../../Bibliographie/authors,../../Bibliographie/bib_graph_polynomials,../../Bibliographie/bib_own,../../Bibliographie/bib_other}
\printbibliography

\end{document}